\numberwithin{equation}{section}
\newtheorem{theorem}{Theorem}[section]
\newenvironment{ctheorem}[1]
  {\cthm}
  {\endcthm}
\newenvironment{ccorollary}[1]
  {\ccor}
  {\endccor}
\newtheorem{lemma}{Lemma}[section]
\theoremstyle{definition}
\newtheorem{definition}{Definition}[section]
\theoremstyle{remark}
\newcommand{\Rnum}[1]{\uppercase\expandafter{\romannumeral #1\relax}}
\newcommand{\rnum}[1]{\romannumeral #1\relax}
\newcommand{\mr}[1]{\mathrm{#1}}
\newcommand{\mb}[1]{\mathbb{#1}}
\newcommand{\mc}[1]{\mathcal{#1}}
\DeclareMathOperator{\av}{Av}
\DeclareMathOperator{\mt}{\mathcal{M}_\mathcal{T}}
\DeclareMathOperator{\md}{\mathcal{M}_d}
\title{An alternative proof of a sharp generalization of an integral inequality for the dyadic maximal operator and applications}
\author{Eleftherios N. Nikolidakis}
\date{}
\begin{document}
\maketitle
\footnotetext{ {\em E-mail address}: lefteris@math.uoc.gr}
\footnotetext{ {\em MSC Number}: 42B25}

\begin{abstract}
We give an alternative proof of a sharp generalization of an integral inequality for the dyadic maximal operator due to which the evaluation of the Bellman function of this operator with respect to two variables, is possible. This last mentioned inequality, which was first noticed in \cite{3}, also generalizes in a certain direction the results of \cite{7}.
\end{abstract}

\section{Introduction} \label{sec:1}
The dyadic maximal operator on $\mb R^n$ is a useful tool in analysis and is defined by
\begin{equation} \label{eq:1p1}
\md \phi(x) = \sup \left\{ \frac{1}{|Q|}\int_Q|\phi(y)|\,\mr dy: x\in Q,\ Q\subseteq \mb R^n\ \text{is a dyadic cube}\right\},
\end{equation}
for every $\phi\in L_\text{loc}^1(\mb R^n)$, where the dyadic cubes are those formed by the grids $2^{-N}\mb Z^n$, for $N=0, 1, 2,\ldots$.
As is well known it satisfies the following weak type (1,1) inequality
\begin{equation} \label{eq:1p2}
\left|\left\{x\in\mb R^n : \md \phi(x) > \lambda\right\}\right| \leq
\frac{1}{\lambda} \int_{\{\md\phi>\lambda\}}|\phi(y)|\,\mr dy,
\end{equation}
for every $\phi\in L^1(\mb R^n)$ and every $\lambda>0$, from which it is easy to get the following $L^p$-inequality
\begin{equation} \label{eq:1p3}
\|\md\phi\|_p \leq \frac{p}{p-1}\|\phi\|_p,
\end{equation}
for every $p>1$ and $\phi\in L^p(\mb R^n)$.

It is easy to see that the weak type inequality \eqref{eq:1p2} is best possible. It has also been proved that \eqref{eq:1p3} is best possible (see \cite{1}, \cite{2} for general martingales and \cite{15} for dyadic ones).

For the study of the dyadic maximal operator it is desirable for one to find refinements of the above mentioned inequalities. Concerning \eqref{eq:1p2}, improvements have been given in, \cite{8} and \cite{9}. If we consider \eqref{eq:1p3}, there is a refinement of it if one fixes the $L^1$-norm of $\phi$. That is we wish to find explicitly the following function (named as Bellman) of two variables $f$ and $F$.
\begin{equation} \label{eq:1p4}
B_Q^{(p)}(f,F) = \sup\left\{ \frac{1}{|Q|} \int_Q (\md \phi)^p : \phi \geq 0,\ \frac{1}{|Q|}\int_Q\phi=f,\ \frac{1}{|Q|}\int_Q\phi^p=F\right\},
\end{equation}
where $Q$ is a fixed dyadic cube and $f, F$ are such that $0< f^p\leq F$.

This function was first evaluated in \cite{4}. In fact it has been explicitly computed in a much more general setting of a non-atomic probability space $(X,\mu)$ equipped with a tree structure $\mc T$, which is similar to the structure of the dyadic subcubes of $[0,1]^n$ (see the definition in Section \ref{sec:2}). Then we define the associated maximal operator by
\begin{equation} \label{eq:1p5}
\mt\phi(x) = \sup\left\{ \frac{1}{\mu(I)}\int_I |\phi|\,\mr d\mu: x\in I\in \mc T\right\},
\end{equation}
for every $\phi\in L^1(X,\mu)$.

Moreover \eqref{eq:1p2} and \eqref{eq:1p3} still hold in this setting and remain sharp. Now if we wish to refine \eqref{eq:1p3} we should introduce the so-called Bellman function of the dyadic maximal operator of two variables given by
\begin{equation} \label{eq:1p6}
B_{\mc T}^{(p)}(f,F) = \sup\left\{ \int_X(\mt\phi)^p\,\mr d\mu: \phi\geq 0,\ \int_X\phi\,\mr d\mu = f,\ \int_X\phi^p\,\mr d\mu = F\right\},
\end{equation}
where $0< f^p\leq F$.
This function of course generalizes \eqref{eq:1p4}. In \cite{4} it is proved that
\[
B_{\mc T}^{(p)}(f,F) = F\,\omega_p\!\left(\frac{f^p}{F}\right)^p,
\]
where $\omega_p : [0,1] \to \bigl[1,\frac{p}{p-1}\bigr]$, is defined by $\omega_p(z) = H_p^{-1}(z)$, and $H_p(z)$ is given by $H_p(z) = -(p-1)z^p + pz^{p-1}$. As a consequence $B_{\mc T}^{(p)}(f,F)$ does not depend on the structure of the tree $\mc T$.
The technique for the evaluation of \eqref{eq:1p6}, that is used in \cite{4}, is based on an effective linearization of the dyadic maximal operator that holds on an adequate class of functions called $\mc T$-good (see the definition in Section 2), which is enough to describe the problem that is settled on \eqref{eq:1p6}.
In \cite{7} now a different approach has been given, for the evaluation of \eqref{eq:1p6}. This was actually done for the Bellman function of three variables in a different way, avoiding the calculus arguments that are given in \cite{4}. More precisely the following is a consequence of the results in \cite{7}.

\begin{ctheorem}{A} \label{thm:a}
Let $\phi\in L^p(X,\mu)$ be non-negative, with $\int_X\phi\,\mr d\mu=f$. Then the following inequality is true
\begin{equation} \label{eq:1p7}
\int_X(\mt\phi)^p\,\mr d\mu \leq -\frac{1}{p-1}f^p + \frac{p}{p-1}\int_X \phi\,(\mt\phi)^{p-1}\,\mr d\mu.
\end{equation}
\end{ctheorem}
This inequality, as we will see in this paper enables us to find a direct proof of the exact evaluation of \eqref{eq:1p6} (we present it for completeness-for a more general approach see \cite{7}). For this evaluation we will also need a symmetrization principle that can be found in \cite{7} and which is presented as Theorem \ref{thm:2p1} below. In this paper we will prove the following generalization of Theorem \ref{thm:a}.
\begin{ctheorem}{1} \label{thm:1}
Let $\phi: (X,\mu)\to \mb R^+$ be $\mc T$-good such that $\int_X\phi\,\mr d\mu=f$. Then for every $q\in [1,p]$ the following inequality holds
\begin{equation} \label{eq:1p8}
\int_X(\mt\phi)^p\,\mr d\mu \leq -\frac{q}{p-1}f^p + \left(\frac{p}{p-1}\right)^q\int_X\phi^q(\mt\phi)^{p-q}\,\mr d\mu.
\end{equation}
Additionally \eqref{eq:1p8} is best possible for any given $q\in[1,p]$ and $f>0$.
\end{ctheorem}
Obviously Theorem \ref{thm:1} generalizes \eqref{eq:1p7}.
We will first prove Theorem \ref{thm:1}, for the case $q=1$, that is we will provide a proof of \eqref{eq:1p7}. This can be seen in Section 3. This is done by using the linearization technique that appears in \cite{4}. By using now another technique and the statement of Theorem \ref{thm:1} it is possible for us to give a proof of the Theorem that appears just below (mentioned as Theorem 2), which generalizes Theorem 1 and which is the following.
\begin{ctheorem}{2} \label{thm:2}
Let $\phi$ be as in the hypothesis of Theorem 1 and suppose that $q\in [1,p]$. Then the following inequality is true for any $\beta>0$.
\begin{multline} \label{eq:1p9}
\begin{aligned}&\int_X(\mt\phi)^p\,\mr d\mu \leq -\frac{q(\beta+1)}{(p-1)q\beta+(p-q)}f^p + \end{aligned} \\
+\frac{p(\beta+1)^q}{(p-1)q\beta+(p-q)}\int_X\phi^q(\mt\phi)^{p-q}\,\mr d\mu.
\end{multline}
Additionally \eqref{eq:1p9} is best possible for any given $q\in[1,p]$, $f>0$ and $\beta$ such that $0<\beta \leq \frac{1}{p-1}$. By this we mean that if one fixes the second constant appearing on the right hand side of inequality \eqref{eq:1p9}, then we cannot increase the absolute value of the first constant appearing in front of $f^p$, in a way such that \eqref{eq:1p9} still holds.
\end{ctheorem}

We need also to mention that this inequality is a consequence of the results of \cite{3}. The main core of \cite{3} is the proof of a stronger inequality,
and for this one we are forced to use the linearization technique that was introduced in \cite{4}. A simple application of this last mentioned inequality
gives Theorem \ref{thm:2}, as one can see in \cite{3}, but in this paper we use the linearization technique only for the proof of Theorem \ref{thm:1}, which is now simplified. In \cite{3}, we use a refinement of this linearization technique in order to produce the stronger inequality that appears there, and for this purpose we look at this technique in more depth. We should also mention that the role of this stronger inequality is to give us a tool to approach more general Bellman functions of the dyadic maximal operator that involve more variables (and in fact integral-which is a difficult task) and for this reason we give in \cite{3} another approach, different from the one that appears here, in order to give stronger results. That is we use in \cite{3} the depth of this linearization technique among other arguments that we hope to give us certain Bellman functions of more integral variables.

The purpose of the present paper is to describe a proof of one partial result that comes immediately from the results in \cite{3}. To be more precise we first give in Section 3 a proof of Theorem A. The important in this proof is that it uses only the linearization setting of the problem, but not the ingenious arguments that appear in \cite {4}. This is not strange because by using this approach we reach to an inequality that is simpler by the one that the author in \cite{4} reaches, which provided him the way to evaluate the Bellman function of interest. But as we will see in the same Section, by using the approach of \cite{7} we can reach to the Bellman function by a different path. What we mean is that the inequality that states Theorem A, is not as innocent as it seems, even that it is produced by an elementary simple manner. But we should also mention the following. This inequality, \eqref{eq:1p7}, and only the statement of this, enables as to give a direct proof of the inequality \eqref{eq:1p9}. This last inequality gives us by a simple replacement of the exponent $q$ (the first independent variable) by $p$, the precise results as appear in \cite{4}, as we shall see at the end of this paper. This means that by using only the linearization setting of the dyadic maximal operator, we can avoid the ingenious mechanism that appears in \cite{4}, and reach to the same inequality which is
(4.25), page 326 of \cite{4}, which after a suitable minimization gives us the Bellman function.

We should also note that the above results have additinal applications in view of symmetrization principles, known for the dyadic maximal operator and is the following consequence of Theorem 2.
\begin{ccorollary}{1} \label{cor:1}
For any $g: (0,1]\to\mb R^+$ non-increasing such that $\int_0^1 g(u)\,\mr du=f$, the following inequality is true for any $\beta>0$ and sharp for any
$\beta$ such that $0<\beta \leq\frac{1}{p-1}$.
\begin{multline} \label{eq:1p10}
\begin{aligned} &\int_0^1\left(\frac{1}{t}\int_0^tg(u)\,\mr du\right)^p\mr dt \leq
-\frac{q(\beta+1)}{(p-1)q\beta+(p-q)}f^p +  \end{aligned}\\
+\frac{p(\beta+1)^q}{(p-1)q\beta+(p-q)}\int_0^1\left(\frac{1}{t}\int_0^t g(u)\,\mr du\right)^{p-q}\!\!g^q(t)\,\mr dt.
\end{multline}
\end{ccorollary}
For the case $q=1$, and the value $\beta=\frac{1}{p-1}$, this inequality is well known and is in fact equality, as can be seen by applying a simple integration by parts argument. We conclude that by using the dyadic maximal operator effectively and related symmetrization principles associated to it we can prove inequalities of Hardy type. Note also that these types of inequalities involve
parameters inside them, and the validity of them still remains true as much as their sharpness. These type of inequalities as \eqref{eq:1p9} or \eqref{eq:1p10}, generalize inequality \eqref{eq:1p7} in two important directions, and this is the appearance of the two parameters involved.

At last we mention that the evaluation of \eqref{eq:1p6} has been given by an alternative method in \cite{10}, while certain Bellman functions
corresponding to several problems in harmonic analysis, have been studied in \cite{5}, \cite{6}, \cite{7}, \cite{12}, \cite{13} and \cite{14}.

\section{Preliminaries} \label{sec:2}
Let $(X,\mu)$ be a non-atomic probability space. We give the following from \cite{4} or \cite{7}.

\begin{definition} \label{def:2p1}
{\em A set $\mc T$ of measurable subsets of $X$ will be called a tree if the following are satisfied}
\begin{enumerate}[i)]{\em
\item $X\in\mc T$ and for every $I\in\mc T$, $\mu(I) > 0$.
\item For every $I\in\mc T$ there corresponds a finite or countable subset $C(I)$ of $\mc T$ containing at least two elements such that
\vspace{-5pt}}
\begin{enumerate}[a)]{\em
\item the elements of $C(I)$ are pairwise disjoint subsets of $I$
\item $I = \bigcup\, C(I)$.}
\end{enumerate}{\em
\item $\mc T = \bigcup_{m\geq 0} \mc T_{(m)}$, where $\mc T_{(0)} = \left\{ X \right\}$ and
\[
\mc T_{(m+1)} = \bigcup_{I\in \mc T_{(m)}} C(I).
\]
\item The following holds
\[
\lim_{m\to\infty} \sup_{I\in \mc T_{(m)}} \mu(I) = 0
\]
}
\end{enumerate}
\end{definition}

For the proof of Theorem \ref{thm:1} we will need an effective linearization for the operator $\mt$ that was introduced in \cite{4}. We describe it as appears there and use it in the sequel.

For every $\phi\in L^1(X,\mu)$, non negative, and $I\in\mc T$ we define $\av_I(\phi) = \frac{1}{\mu(I)}\int_I\phi\,\mr d\mu$.
We will say that $\phi$ is $\mc T$-good if the set
\[
\mc A_\phi = \left\{x\in X: \mt \phi(x) > \av_I(\phi)\ \text{for all}\ I\in\mc T\ \text{such that}\ x\in I\right\}
\]
has $\mu$-measure zero. \\
Let now $\phi$ be $\mc T$-good and $x\in X\!\setminus\!\mc A_\phi$. We define $I_\phi(x)$ to be the largest in the nonempty set
\[
\left\{ I\in\mc T: x\in I\ \text{and}\ \mt\phi(x) = \av_I(\phi)\right\}.
\]
Now given $I\in\mc T$ let
\begin{align*}
A(\phi,I) &= \left\{x\in X\!\setminus\!\mc A_\phi: I_{\phi}(x) = I\right\}\subseteq I\ \ \text{and} \\
S_\phi &= \left\{I\in\mc T: \mu(A(\phi,I))>0\right\} \cup \left\{X\right\}.
\end{align*}
Obviously then
\[
\mt\phi = \sum_{I\in S_\phi} \av_I(\phi) \chi_{A(\phi,I)},\ \mu\text{-a.e.},
\]
where $\chi_E$ is the characteristic function of $E$.
We also define the following correspondence $I\to I^\star$ by: $I^\star$ is the smallest element of $\left\{J\in S_\phi: I\subsetneq J\right\}$. It is defined for every $I\in S_\phi$, except $X$. Also it is obvious that the $A(\phi,I)$'s are pairwise disjoint and that
\[
\mu\left(\bigcup_{I\notin S_\phi} A(\phi,I)\right) = 0,
\]
so that
\[
\bigcup_{I\in S_\phi} A(\phi,I) \approx X,
\]
where by $A\approx B$ we mean that
\[
\mu(A\!\setminus\!B) = \mu(B\!\setminus\!A) = 0.
\]
Now the following is true (see \cite{4}).
\begin{lemma} \label{lem:2p1}
Let $\phi$ be $\mc T$-good
\begin{enumerate}[i)]
\item If $I, J\in S_\phi$ then either $A(\phi,J)\cap I = \emptyset$ or $J\subseteq I$.
\item If $I\in S_\phi$ then there exists $J\in C(I)$ such that $J\notin S_\phi$.
\item For every $I\in S_\phi$ we have that $I\approx \underset{\substack{J\in S_\phi\\J\subseteq I}}{\bigcup} A(\phi,J)$.
\item For every $I\in S_\phi$ we have that
\[
A(\phi,I) = I \setminus \underset{\substack{J\in S_\phi\\ J^\star=I}}{\bigcup} J,
\]
so that
\[
\mu(A(\phi,I)) = \mu(I) - \sum_{\substack{J\in S_\phi\\ J^\star=I}} \mu(J).
\]
\end{enumerate}
\end{lemma}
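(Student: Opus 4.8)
The plan is to isolate two structural facts and then deduce all four assertions by chasing inclusions and comparing averages. The first fact (Fact A) is the \emph{laminar property}: any two elements of $\mc T$ are either nested or disjoint. An easy induction using part (iii) of Definition~\ref{def:2p1} shows each level $\mc T_{(m)}$ partitions $X$ and $\mc T_{(m+1)}$ refines $\mc T_{(m)}$; hence a cube of level $n\ge m$ lies in a unique cube of level $m$, which forces any two cubes sharing a point to be nested, and moreover a proper superset always has strictly smaller level. The second fact (Fact B) records the maximality built into $I_\phi$: if $I_\phi(y)=I$ and $I'\in\mc T$ has $y\in I'$ with $I\subsetneq I'$, then $I'$ cannot attain the maximal average at $y$, so $\av_{I'}(\phi)<\mt\phi(y)=\av_I(\phi)$ --- averages strictly drop on strictly larger ancestors through a stopping point. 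Granting these, I would prove (i) by contradiction: if $x\in A(\phi,J)\cap I$ then $I,J$ share $x$ and are nested by Fact A; if $J\not\subseteq I$ then $I\subsetneq J$, so $I\ne X$ and $I\in S_\phi$ furnishes a witness $y\in A(\phi,I)$. Fact B applied to the ancestor $J\supsetneq I$ through $y$ gives $\av_J(\phi)<\av_I(\phi)$, whereas $x\in A(\phi,J)$ gives $\mt\phi(x)=\av_J(\phi)$ and $x\in I$ gives $\mt\phi(x)\ge\av_I(\phi)$, i.e.\ $\av_J(\phi)\ge\av_I(\phi)$; the contradiction forces $J\subseteq I$.

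For (ii) I would use the averaging identity together with Fact B. Suppose every $J\in C(I)$ lay in $S_\phi$; each such child is a proper subset of $I$ and differs from $X$, hence has a witness, and Fact B (with $I$ as the larger ancestor) yields $\av_J(\phi)>\av_I(\phi)$ for every $J\in C(I)$. But $I=\bigcup C(I)$ disjointly, so $\av_I(\phi)=\sum_{J\in C(I)}\frac{\mu(J)}{\mu(I)}\av_J(\phi)$ is a convex combination of the children's averages, giving $\av_I(\phi)>\av_I(\phi)$ --- absurd; thus some child escapes $S_\phi$. For (iii), the inclusion $\bigcup_{J\in S_\phi,\,J\subseteq I}A(\phi,J)\subseteq I$ is immediate since $A(\phi,J)\subseteq J$. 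For the reverse modulo null sets I would take a.e.\ $x\in I$: as $\bigcup_{J\in S_\phi}A(\phi,J)\approx X$, such an $x$ lies in some $A(\phi,J)$ with $J\in S_\phi$, and then part (i) (because $A(\phi,J)\cap I\ne\emptyset$) forces $J\subseteq I$, placing $x$ in the claimed union.

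Finally for (iv) I would establish the exact inclusion $A(\phi,I)\subseteq I\setminus\bigcup_{J^\star=I}J$ first: each $J$ with $J^\star=I$ has $J\subsetneq I$ and $J\in S_\phi$, so part (i) --- now read with $I$ as the distinguished set and $J$ as the container --- rules out $A(\phi,I)\cap J\ne\emptyset$, since that would force $I\subseteq J$. For the reverse modulo null sets I would take a.e.\ $x\in I\setminus\bigcup_{J^\star=I}J$; by (iii) it lies in some $A(\phi,K)$ with $K\in S_\phi$ and $K\subseteq I$, and I must show $K=I$. If $K\subsetneq I$, I would climb the $\star$-chain $K\subsetneq K^\star\subsetneq K^{\star\star}\subsetneq\cdots$, all contained in $I$ because $I\in S_\phi$ is an ancestor and $\star$ selects the smallest $S_\phi$-ancestor; by Fact A this strictly ascending chain has strictly decreasing levels, hence reaches $I$ after finitely many steps, so some member $J$ satisfies $J^\star=I$ with $x\in A(\phi,K)\subseteq K\subseteq J$, contradicting $x\notin\bigcup_{J^\star=I}J$. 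Hence $K=I$ and $x\in A(\phi,I)$. The measure identity then follows because the cubes $\{J:J^\star=I\}$ are pairwise disjoint (if two shared a point they would be nested by Fact A, and $J_1\subsetneq J_2$ would give $J_1^\star\subseteq J_2\subsetneq I$, against $J_1^\star=I$), whence $\mu(A(\phi,I))=\mu(I)-\sum_{J^\star=I}\mu(J)$. The step I expect to be most delicate is precisely this reverse inclusion in (iv): it is where one must guarantee that the $\star$-chain strictly increases and terminates (using the stratification $\mc T=\bigcup_m\mc T_{(m)}$ and the finiteness of each cube's level), and where one must carry the null set $\mc A_\phi$ carefully, so that (iii) and the reverse inclusion in (iv) are asserted only up to $\mu$-null sets while the forward inclusion $A(\phi,I)\subseteq I\setminus\bigcup_{J^\star=I}J$ is exact.
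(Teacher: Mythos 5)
The paper offers no proof of this lemma at all --- it is quoted verbatim from \cite{4} with the remark ``see \cite{4}'' --- so there is nothing internal to compare against; judged on its own, your proof is correct and is essentially the standard argument of Melas: laminarity of the tree (nested-or-disjoint, proved by the level-partition induction), plus the strict drop of averages on proper ancestors of $I_\phi(y)$ forced by the \emph{largest}-element convention, and these two facts do drive (i)--(iv) exactly as you chase them, including the convexity contradiction in (ii) and the terminating $\star$-chain in (iv). One point in your write-up deserves emphasis as a genuine (correct) refinement of the statement: the equality in (iv) as literally printed cannot hold exactly, since any $x\in\mc A_\phi\cap I$, or any $x$ with $I_\phi(x)=K\notin S_\phi$, $K\subseteq I$, not captured by any $J$ with $J^\star=I$, lies in $I\setminus\bigcup_{J^\star=I}J$ but not in $A(\phi,I)$; your version --- exact forward inclusion, reverse inclusion modulo the null sets $\mc A_\phi$ and $\bigcup_{K\notin S_\phi}A(\phi,K)$ --- is what is actually true, and it suffices because the pairwise disjointness of $\{J: J^\star=I\}$ (your laminarity argument) then yields the measure identity, which is all the paper uses later, in \eqref{eq:3p5} and \eqref{eq:3p9}.
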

\noindent From the above we see that
\[
\av_I(\phi) = \frac{1}{\mu(I)} \sum_{\substack{J\in S_\phi\\ J\subseteq I}} \int_{A(\phi,J)}\phi\,\mr d\mu.
\]
In the sequel we will also need the notion of the decreasing rearrangement of a $\mu$-measurable function defined on $X$. This is given by the following equation
\[
\phi^\star(t) = \sup_{\substack{e\subseteq X\\ \mu(e) \geq t}} \Bigl[ \inf_{x\in e}|\phi(x)| \Bigr],\ \ t\in (0,1].
\]
This is a non-increasing, left continuous function defined on $(0,1]$ and equimeasurable to $|\phi|$ (that is $\mu\!\left(\left\{|\phi|>\lambda\right\}\right) = \left|\left\{\phi^\star>\lambda\right\}\right|$, for any $\lambda>0$). A more intuitive definition of $\phi^\star$ is that it describes a rearrangement of the values of $|\phi|$ in decreasing order.
We are now ready to state the following, which appears in \cite{7} and can be viewed as a symmetrization principle for the dyadic maximal operator.
\begin{theorem} \label{thm:2p1}
The following equality is true
\begin{multline} \label{eq:2p1}
\begin{aligned} &\sup\left\{ \int_K G_1(\mt\phi)\, G_2(\phi)\,\mr d\mu: \phi^\star = g,\ \phi\geq 0,\right. \\
&\hspace{30pt} \left. \vphantom{\int_K} K\, \text{measurable subset of}\ X\ \text{with}\ \mu(K)=k\right\} = \end{aligned} \\
=\int_0^k G_1\!\!\left(\frac{1}{t}\int_0^t g\right) G_2(g(t))\,\mr dt,
\end{multline}
where $G_i: [0,+\infty) \to [0,+\infty)$ are increasing functions for $i=1,2$, while $g: (0,1]\to \mb R^+$ is non-increasing. Additionally the supremum in \eqref{eq:2p1} is attained by some $(\phi_n)$ such that $\phi_n^\star = g$, for every pair of functions $(G_1, G_2)$.
\end{theorem}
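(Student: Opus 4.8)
The plan is to prove the identity by establishing its two halves separately: an upper bound $\int_K G_1(\mt\phi)\,G_2(\phi)\,\mr d\mu \le \int_0^k G_1\!\left(\tfrac1t\int_0^t g\right)G_2(g(t))\,\mr dt$ valid for \emph{every} admissible pair $(\phi,K)$, and a matching lower bound exhibited by an explicit extremal sequence. A useful first reduction is that, for a fixed $\phi$, the integrand $w:=G_1(\mt\phi)\,G_2(\phi)$ is non-negative, so the supremum over measurable $K$ with $\mu(K)=k$ is attained on a superlevel set of $w$ and equals $\int_0^k w^\star$ (exactly, since $\mu$ is non-atomic). Thus the problem becomes a rearrangement estimate, and it then remains only to saturate it.

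For the upper bound I would write $u=G_1(\mt\phi)$ and $v=G_2(\phi)$ and use three ingredients. First, since $G_2$ is increasing, rearrangement commutes with it, giving $v^\star(t)=G_2(\phi^\star(t))=G_2(g(t))$, and likewise $u^\star(t)=G_1\big((\mt\phi)^\star(t)\big)$. Second, a layer-cake computation yields the restricted rearrangement inequality $\int_E uv\,\mr d\mu \le \int_0^{\mu(E)} u^\star(t)v^\star(t)\,\mr dt$ for every measurable $E$: expanding $u$ and $v$ through their level sets reduces the left side to $\iint \mu\big(E\cap\{u>a\}\cap\{v>b\}\big)\,\mr da\,\mr db$, and one bounds $\mu(E\cap\{u>a\}\cap\{v>b\})\le\min\big(\mu(E),\mu(\{u>a\}),\mu(\{v>b\})\big)$, which is precisely the integrand produced by the layer-cake expansion of $\int_0^{\mu(E)}u^\star v^\star$. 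Applying this with $E=K$ gives $\int_K uv\le\int_0^k u^\star v^\star$. Third, the sharp rearrangement bound for the tree maximal operator, $(\mt\phi)^\star(t)\le \tfrac1t\int_0^t\phi^\star=\tfrac1t\int_0^t g$, follows from the weak-type inequality (valid in the tree setting, cf. \eqref{eq:1p2}) applied on $\{\mt\phi>\lambda\}$ together with the Hardy--Littlewood bound $\int_{\{\mt\phi>\lambda\}}\phi\le\int_0^{\mu(\{\mt\phi>\lambda\})} g$. Since $G_1$ is increasing, $u^\star(t)\le G_1\!\left(\tfrac1t\int_0^t g\right)$, and chaining the three facts delivers the upper bound.

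For the reverse inequality I would build a sequence $(\phi_n)$ with $\phi_n^\star=g$ and sets $K_n$ of measure $k$ that approach the bound. Using properties (ii)--(iv) of the tree and the non-atomicity of $\mu$, for a fine partition $1=s_0>s_1>\cdots>s_N=0$ I select a nested chain of tree-measurable sets $X=E_0\supsetneq E_1\supsetneq\cdots\supsetneq E_N$ with $\mu(E_j)=s_j$, arranging each $E_j$ as a union of descendants of $E_{j-1}$ so that the annulus $E_{j-1}\setminus E_j$ is itself a union of tree sets disjoint from $E_j$. On the $j$-th annulus I place a rearrangement of $g$ restricted to the slice $(s_j,s_{j-1}]$, so that $\phi_n^\star=g$ holds exactly and, for a fine partition, $\phi_n$ is nearly constant there with value near $g$. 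A telescoping computation gives $\av_{E_{j-1}}(\phi_n)=\tfrac{1}{s_{j-1}}\int_0^{s_{j-1}}g=h(s_{j-1})$ where $h(t):=\tfrac1t\int_0^t g$; since $h$ is non-increasing the deepest containing set $E_{j-1}$ furnishes the largest ancestor average, so $\mt\phi_n\approx h(s_{j-1})$ on the $j$-th annulus while $\phi_n\approx g$ there. Hence $\mt\phi_n$ and $\phi_n$ decrease together, and taking $K_n$ to be the union of the deepest annuli of total measure $k$ (a superlevel set of the product, the same choice for \emph{every} increasing pair $G_1,G_2$) one obtains $\int_{K_n}G_1(\mt\phi_n)G_2(\phi_n)\,\mr d\mu\to\int_0^k G_1(h(t))G_2(g(t))\,\mr dt$ as $N\to\infty$. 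Note the sequence $(\phi_n,K_n)$ depends only on $g$ and $k$, so it is universal in $(G_1,G_2)$ as the statement asserts.

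I expect the delicate step to be the control of $\mt\phi_n$ in the construction: one must ensure that no tree set lying strictly between a point of the $j$-th annulus and $E_{j-1}$ produces an average exceeding the target $h(s_{j-1})$. This forces the large values of $g$ to be buried deep inside $E_j$ and kept isolated, and the annulus to be assembled from children of $E_{j-1}$ that exclude $E_j$; with this arrangement every intermediate average stays near the local value of $g$, hence below $h(s_{j-1})$, and $\mt\phi_n$ genuinely reaches $h(s_{j-1})$. Verifying this, together with the existence of the nested chain of prescribed measures, is the technical heart of the argument; by comparison the upper bound is a routine rearrangement computation.
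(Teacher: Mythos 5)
First, note that the paper does not prove Theorem \ref{thm:2p1} at all: it is quoted from \cite{7}, so there is no internal proof here to compare against. Judged on its own merits, your upper bound is correct and is the standard route: the restricted Hardy--Littlewood inequality $\int_K uv\,\mr d\mu\le\int_0^{\mu(K)}u^\star(t)v^\star(t)\,\mr dt$, the a.e.\ identity $(G_i\circ\psi)^\star=G_i\circ\psi^\star$ for increasing $G_i$, and the rearrangement estimate $(\mt\phi)^\star(t)\le\frac1t\int_0^t\phi^\star$ coming from the weak type inequality chain together exactly as you say.

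The lower bound, however, has a genuine gap at the point you half-identify but then misdiagnose. Your sets $E_j$ must be unions of tree elements rather than single elements of $\mc T$: a single chain $I_0\supsetneq I_1\supsetneq\cdots$ in $\mc T$ cannot in general realize prescribed measures $s_j$, since all children of an element may be much smaller than the element itself. But $\mt$ only tests averages over individual members of $\mc T$, so the telescoping identity $\av_{E_{j-1}}(\phi_n)=\frac1{s_{j-1}}\int_0^{s_{j-1}}g$ gives no lower bound on $\mt\phi_n$ on the $j$-th annulus: a point $x$ there lies in one particular piece $I\in\mc T$ of the union $E_{j-1}$, and $\av_I(\phi_n)$ can fall far below the average over the whole union if the large values of $g$ happen to be buried inside the other pieces. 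The repair is to distribute every slice $g|_{(s_j,s_{j-1}]}$ proportionally over all the pieces, so that $\av_I(\phi_n)=\frac1{s_{j-1}}\int_0^{s_{j-1}}g$ holds for each individual $I$ composing $E_{j-1}$, not merely for their union; this is exactly the self-similar construction of \cite{4} and \cite{7}, which relies on the splitting lemma (for every $I\in\mc T$ and $\alpha\in(0,1)$ there is a pairwise disjoint subfamily of $\mc T$ inside $I$ of total measure exactly $\alpha\mu(I)$), and it is the missing ingredient in your sketch. Note also that the worry you flag as the technical heart points in the wrong direction: intermediate averages \emph{exceeding} $h(s_{j-1})$ are harmless, since $G_1$ is increasing and the upper bound is already established for all admissible $(\phi,K)$; what must be ruled out is $\av_I(\phi_n)$ falling \emph{short} of $h(s_{j-1})$ on some of the pieces.
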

We will need the above theorem in order to complete, as is done in \cite{7}, the evaluation of the Bellman function of the dyadic maximal operator, \eqref{eq:1p6}, by using \eqref{eq:1p7}, which will be proved right below.

\section{Proof of the inequality \eqref{eq:1p7}} \label{sec:3}
We now proceed to the
\begin{proof}[Proof of Theorem \ref{thm:1} (for $q=1$)] ~ \\
Suppose that $\phi$ is $\mc T$-good, non-negative, such that $\int_X\phi\,\mr d\mu=f$. We will prove that
\[
\int_X(\mt\phi)^p\,\mr d\mu \leq -\frac{1}{p-1}f^p + \frac{p}{p-1}\int_X\phi\, (\mt\phi)^{p-1}\,\mr d\mu.
\]
We use the linearization technique mentioned in the previous Section. As we mentioned there, $\mt\phi$ can be written as
\begin{equation} \label{eq:3p1}
\mt\phi = \sum_{I\in S_\phi} \av_I(\phi) \chi_{A(\phi,I)},\ \ \mu\text{-almost everywhere on}\ X.
\end{equation}
Integrating \eqref{eq:3p1} over $X$ we see that
\begin{equation} \label{eq:3p2}
\int_X(\mt\phi)^p\,\mr d\mu = \sum_{I\in S_\phi} a_Iy_I^p,
\end{equation}
where $a_I = \mu(A(\phi,I))$ and $y_I = \av_I(\phi)$. \\
Additionally
\begin{equation} \label{eq:3p3}
\int_X\phi\,(\mt\phi)^{p-1}\,\mr d\mu = \sum_{I\in S_\phi} \int_{A(\phi,I)} \phi\,(\mt\phi)^{p-1}\,\mr d\mu = \sum\Bigl(\int_{A_I} \phi\,\mr d\mu\Bigr)y_I^{p-1},
\end{equation}
where $A_I = A(\phi,I)$ and $y_I$ are defined as above. \\
Consider now the difference
\[
\Delta = \int_X(\mt\phi)^p\,\mr d\mu - \frac{p}{p-1}\int_X\phi\,(\mt\phi)^{p-1}\,\mr d\mu
\]
which equals due to \eqref{eq:3p2} and \eqref{eq:3p3}, to
\begin{equation} \label{eq:3p4}
\Delta = \sum_{I\in S_\phi} a_Iy_I^p - \frac{p}{p-1}\sum_{I\in S_\phi}\Bigl(\int_{A_I}\phi\,\mr d\mu\Bigr)y_I^{p-1}.
\end{equation}
At this point we use the Lemma \ref{lem:2p1} \rnum 4), and conclude that
\begin{equation} \label{eq:3p5}
\int_{A_I}\phi\,\mr d\mu = \int_I\phi\,\mr d\mu - \sum_{\substack{J\in S_\phi\\J^\star=I}}\int_J\phi\,\mr d\mu = \mu(I)y_I - \sum_{\substack{J\in S_\phi\\J^\star=I}} \mu(J)y_J.
\end{equation}
Thus \eqref{eq:3p4}, in view of \eqref{eq:3p5} gives
\begin{align} \label{eq:3p6}
\Delta &= \sum_{I\in S_\phi} a_Iy_I^p - \frac{p}{p-1}\sum_{I\in S_\phi}\Biggl(\mu(I)y_I - \sum_{\substack{J\in S_\phi\\J^\star=I}}\mu(J)y_J\Biggr)y_I^{p-1} = \notag \\
 &= \sum_{I\in S_\phi}a_Iy_I^p - \frac{p}{p-1}\sum_{I\in S_\phi}\mu(I)y_I^p + \frac{p}{p-1}\sum_{I\in S_\phi}\Biggl(\sum_{\substack{J\in S_\phi\\ J^\star=I}}\mu(J)y_J\Biggr)y_I^{p-1} = \notag \\
 &= \sum_{I\in S_\phi}a_Iy_I^p - \frac{p}{p-1}\sum_{I\in S_\phi}\mu(I)y_I^p + \frac{p}{p-1}\sum_{\substack{I\in S_\phi\\ I\neq X}}\mu(I)y_I(y_{I^\star})^{p-1},
\end{align}
where in the last equation we have used the definition of the correspondence $I\to I^\star$, for $I\in S_\phi$, $I\neq X$.
We use now the elementary inequality $$p\,x\,y^{p-1} \leq x^p + (p-1)y^p,$$
which holds for every $x,y>0$, and $p>1$.
As a consequence, \eqref{eq:3p6} gives
\begin{align} \label{eq:3p7}
\Delta \leq& \sum_{I\in S_\phi}a_Iy_I^p - \frac{p}{p-1}\sum_{I\in S_\phi}\mu(I)y_I^p + \frac{1}{p-1}\sum_{\substack{I\in S_\phi\\ I\neq X}}\mu(I)\left(y_I^p + (p-1)(y_{I^\star})^p\right) = \notag \\
 & \sum_{I\in S_\phi} a_Iy_I^p - \frac{p}{p-1}\sum_{I\in S_\phi}\mu(I)y_I^p + \frac{1}{p-1}\sum_{\substack{I\in S_\phi\\ I\neq X}}\mu(I)y_I^p + \sum_{\substack{I\in S_\phi\\ I\neq X}}\mu(I)(y_{I^\star})^p.
\end{align}
We now easily see that
\begin{equation} \label{eq:3p8}
\sum_{\substack{I\in S_\phi\\ I\neq X}} \mu(I)y_I^p = \sum_{I\in S_\phi}\mu(I)y_I^p - y_X^p,
\end{equation}
and
\begin{equation} \label{eq:3p9}
\sum_{\substack{I\in S_\phi\\ I\neq X}} \mu(I)(y_{I^\star})^p = \sum_{I\in S_\phi}(\mu(I)-a_I)y_I^p,
\end{equation}
where \eqref{eq:3p9} comes from the definitions mentioned above and Lemma \ref{lem:2p1} \rnum 4). Using \eqref{eq:3p8} and \eqref{eq:3p9} in \eqref{eq:3p7} we conclude that
\begin{multline*}
\Delta \leq \sum_{I\in S_\phi}a_Iy_I^p - \frac{p}{p-1}\sum_{I\in S_\phi}\mu(I)y_I^p + \frac{1}{p-1}\sum_{I\in S_\phi}\mu(I)y_I^p - \\
- \frac{1}{p-1}y_X^p + \sum_{I\in S_\phi}(\mu(I)-a_I)y_I^p = -\frac{1}{p-1}y_X^p = -\frac{1}{p-1}f^p.
\end{multline*}
Thus we obtain the desired inequality.
\end{proof}
We now complete this section by evaluating the Bellman function of the dyadic maximal operator, \eqref{eq:1p6}, using the inequality just proved. \\
We state the following
\begin{lemma} \label{lem:3p1}
For any $\phi: (X,\mu)\to\mb R^+$, $\mc T$-good with $\int_X\phi\,\mr d\mu=f$ and $\int_X\phi^p\,\mr d\mu=F$ the following inequality is true:
\begin{equation} \label{eq:3p10}
\int_X(\mt\phi)^p\,\mr d\mu \leq F\,\omega_p\!\left(\frac{f^p}{F}\right)^p.
\end{equation}
\end{lemma}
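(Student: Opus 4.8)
The plan is to obtain \eqref{eq:3p10} directly from the inequality \eqref{eq:1p7} just proved (i.e. Theorem \ref{thm:1} for $q=1$), by combining it with Hölder's inequality and then solving the resulting scalar inequality for the quantity $T := \int_X(\mt\phi)^p\,\mr d\mu$, which is finite since $\phi\in L^p$ forces $\mt\phi\in L^p$ via \eqref{eq:1p3}. First I would estimate the mixed integral on the right-hand side of \eqref{eq:1p7} by Hölder's inequality with conjugate exponents $p$ and $p/(p-1)$,
\[
\int_X\phi\,(\mt\phi)^{p-1}\,\mr d\mu \leq \Bigl(\int_X\phi^p\,\mr d\mu\Bigr)^{1/p}\Bigl(\int_X(\mt\phi)^p\,\mr d\mu\Bigr)^{(p-1)/p} = F^{1/p}\,T^{(p-1)/p},
\]
and substitute this back into \eqref{eq:1p7} to arrive at the purely numerical relation
\[
T \leq -\frac{1}{p-1}f^p + \frac{p}{p-1}\,F^{1/p}\,T^{(p-1)/p}.
\]

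Next I would homogenize this relation. Writing $w = (T/F)^{1/p}$, so that $T = Fw^p$ and $F^{1/p}T^{(p-1)/p} = Fw^{p-1}$, a short rearrangement (multiply by $p-1$ and collect terms) transforms the last display into
\[
F\bigl[-(p-1)w^p + pw^{p-1}\bigr] \geq f^p, \qquad\text{i.e.}\qquad H_p(w) \geq \frac{f^p}{F}.
\]
Thus the whole problem is reduced to reading off an upper bound for $w$ from the inequality $H_p(w)\geq f^p/F$.

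Finally I would exploit the shape of $H_p$. Since $H_p'(w) = p(p-1)\,w^{p-2}(1-w)$, the function $H_p$ increases on $[0,1]$ and decreases on $[1,\infty)$, with $H_p(0)=0$, $H_p(1)=1$ and $H_p\bigl(\tfrac{p}{p-1}\bigr)=0$; hence $H_p$ maps $\bigl[1,\tfrac{p}{p-1}\bigr]$ decreasingly onto $[0,1]$, and $\omega_p=H_p^{-1}$ is precisely this decreasing branch. Because the normalization $0<f^p\leq F$ keeps $f^p/F\in(0,1]$, the solution set of $H_p(w)\geq f^p/F$ is a closed interval whose right endpoint is the point of $\bigl[1,\tfrac{p}{p-1}\bigr]$ at which $H_p$ equals $f^p/F$, namely $\omega_p(f^p/F)$. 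Every admissible $w$ therefore satisfies $w\leq \omega_p(f^p/F)$, and consequently $T = Fw^p \leq F\,\omega_p(f^p/F)^p$, which is exactly \eqref{eq:3p10}.

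The step I expect to be the main obstacle is this branch selection: one must verify that the relevant root of $H_p(w)=f^p/F$ is the one lying on the decreasing branch $\bigl[1,\tfrac{p}{p-1}\bigr]$ (so that it coincides with $\omega_p(f^p/F)$, rather than the companion root in $[0,1]$), and that the inequality $H_p(w)\geq f^p/F$ indeed forces $w$ to lie at or below this root. This rests entirely on the monotonicity profile of $H_p$ deduced from its derivative, together with the condition $f^p\leq F$ that guarantees the existence of a root on the decreasing branch.
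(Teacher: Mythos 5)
Your proposal is correct and follows essentially the same route as the paper: apply \eqref{eq:1p7}, estimate the mixed term by H\"older with exponents $p$ and $p/(p-1)$, normalize to $H_p(w)\geq f^p/F$ with $w=(T/F)^{1/p}$, and invert using the monotonicity profile of $H_p$. The only cosmetic difference is that the paper handles the branch selection by splitting into the cases $w\leq 1$ (where the bound is trivial since $\omega_p\geq 1$) and $w>1$ (where the decreasing branch applies directly), whereas you argue via the full solution set of $H_p(w)\geq f^p/F$; both arguments are equivalent.
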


\begin{proof}
By \eqref{eq:1p7} and H\"{o}lder's inequality we obtain
\begin{align} \label{eq:3p11}
& \begin{aligned} \Lambda_\phi &= \int_X(\mt\phi)^p\,\mr d\mu \leq -\frac{1}{p-1}f^p + \frac{p}{p-1}\int_X\phi(\mt\phi)^{p-1}\,\mr d\mu \leq \\
  &\leq -\frac{1}{p-1}f^p + \frac{p}{p-1}\left(\int_X\phi^p\,\mr d\mu\right)^\frac{1}{p}\left(\int_X(\mt\phi)^p\,\mr d\mu\right)^\frac{(p-1)}{p} = \\
  &= -\frac{1}{p-1}f^p + \frac{p}{p-1}F^\frac{1}{p}(\Lambda_\phi)^\frac{(p-1)}{p} \implies \end{aligned} \notag \\
& \frac{\Lambda_\phi}{F} \leq -\frac{1}{p-1}\frac{f^p}{F} + \frac{p}{p-1}\left(\frac{\Lambda_\phi}{F}\right)^\frac{(p-1)}{p} \implies \notag \\
& (p-1)\left(\frac{\Lambda_\phi}{F}\right) - p\left(\frac{\Lambda_\phi}{F}\right)^\frac{(p-1)}{p} \leq -\frac{f^p}{F} \implies \notag \\
& -(p-1)w^p + pw^{p-1} = H_p(w) \geq \frac{f^p}{F},
\end{align}
where $w = \left(\frac{\Lambda_\phi}{F}\right)^\frac{1}{p}$.
If $w\leq 1$ then we obviously have $\Lambda_\phi\leq F\leq F\,\omega_p\!\left(\frac{f^p}{F}\right)^p$, whereas if $w$ is such that $w>1$, we immediately see from \eqref{eq:3p11}, and the definition of $\omega_p$, that $w\leq \omega_p\!\left(\frac{f^p}{F}\right)$, or that $\Lambda_\phi\leq F\,\omega_p\left(\frac{f^p}{F}\right)^p$, that is \eqref{eq:3p10}. Our proof is now complete.
\end{proof}

We will now prove that Lemma \ref{lem:3p1} holds even if $\phi$ is not necessarily $\mc T$-good. We state it as
\begin{lemma} \label{lem:3p2}
Let $\phi\in L^p(X,\mu)$ such that $\phi\geq 0$, $\int_X\phi\,\mr d\mu = f$ and $\int_X\phi^p\,\mr d\mu=F$. Then $\int_X(\mt\phi)^p\,\mr d\mu \leq F\,\omega_p\!\left(\frac{f^p}{F}\right)^p$.
\end{lemma}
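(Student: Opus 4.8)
The plan is to reduce the general case to the $\mc T$-good case already settled in Lemma \ref{lem:3p1}, by an approximation argument based on conditional expectations along the tree. For each $n\geq 0$ set
\[
\phi_n = \sum_{I\in\mc T_{(n)}} \av_I(\phi)\,\chi_I,
\]
the average of $\phi$ over the (a.e.) partition $\mc T_{(n)}$ of $X$. First I would check that each $\phi_n$ is $\mc T$-good: since $\phi_n$ is constant on every cube of generation $n$, for any $x$ the averages $\av_I(\phi_n)$ over cubes $I\ni x$ depend only on the finitely many ancestors $X=I^{(0)}\supseteq\cdots\supseteq I^{(n)}\ni x$ (any finer cube has the same average as $I^{(n)}$), so $\mt\phi_n(x)$ is the maximum of finitely many numbers and is attained; hence $\mc A_{\phi_n}$ is empty. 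Applying Lemma \ref{lem:3p1} to $\phi_n$, and writing $f_n=\int_X\phi_n\,\mr d\mu=f$ (the conditional expectation preserves the integral) and $F_n=\int_X\phi_n^p\,\mr d\mu$, gives
\[
\int_X(\mt\phi_n)^p\,\mr d\mu \leq F_n\,\omega_p\!\left(\frac{f^p}{F_n}\right)^p,
\]
where $F_n\geq f^p$ by Jensen's inequality, so that $f^p/F_n\in(0,1]$ lies in the domain of $\omega_p$.

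The core of the argument is then to pass to the limit $n\to\infty$ on both sides. On the right, the sequence $(\phi_n)$ is an $L^p$-bounded martingale ($\|\phi_n\|_p\leq\|\phi\|_p$ by Jensen), and property iv) of Definition \ref{def:2p1} ensures that the tree refines sufficiently for the martingale convergence theorem to yield $\phi_n\to\phi$ in $L^p$, hence $F_n\to F$. Since $\omega_p$ is continuous on $[0,1]$, the right-hand side converges to $F\,\omega_p(f^p/F)^p$. On the left, the key observation is that for a fixed $I\in\mc T$ one has $\av_I(\phi_n)=\av_I(\phi)$ as soon as $n$ is at least the generation of $I$, because $I$ is then a union of generation-$n$ cubes and $\int_I\phi_n\,\mr d\mu=\int_I\phi\,\mr d\mu$. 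Consequently, for every $x$ and every $I\ni x$ we get $\mt\phi_n(x)\geq\av_I(\phi)$ for all large $n$, whence
\[
\liminf_{n\to\infty}\mt\phi_n(x)\geq \sup_{I\ni x}\av_I(\phi)=\mt\phi(x).
\]

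Combining the two limits through Fatou's lemma finishes the proof: since $t\mapsto t^p$ is continuous and increasing,
\[
\int_X(\mt\phi)^p\,\mr d\mu \leq \int_X\liminf_{n\to\infty}(\mt\phi_n)^p\,\mr d\mu \leq \liminf_{n\to\infty}\int_X(\mt\phi_n)^p\,\mr d\mu \leq \lim_{n\to\infty}F_n\,\omega_p\!\left(\frac{f^p}{F_n}\right)^p = F\,\omega_p\!\left(\frac{f^p}{F}\right)^p.
\]

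The main obstacle I anticipate is the justification that $F_n\to F$, that is, that the conditional expectations recover the full $L^p$-norm in the limit; this is exactly where property iv) of the tree (that the suprema of the measures of the generation-$m$ cubes tend to $0$) is essential, since without enough refinement the limit $\lim_n F_n$ could a priori fall short of $F$ and the estimate would degrade. Everything else — the $\mc T$-goodness of each $\phi_n$, the Jensen bounds placing $f^p/F_n$ in the domain of $\omega_p$, and the pointwise $\liminf$ estimate for the maximal functions — is elementary given the structure recalled in Section \ref{sec:2}. (If one prefers to avoid the full strength of martingale convergence, an alternative is to note that $F_n$ increases to some $L\leq F$ and to invoke monotonicity of $F\mapsto F\,\omega_p(f^p/F)^p$, but establishing that monotonicity requires an extra computation, so the martingale route is cleaner.)
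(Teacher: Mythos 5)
Your reduction to Lemma \ref{lem:3p1} via the $\mc T$-step functions $\phi_n$ is exactly the paper's, as is the verification that each $\phi_n$ is $\mc T$-good and the handling of the left-hand side (the paper observes that $\mt\phi_n$ in fact increases monotonically to $\mt\phi$, so monotone convergence replaces your Fatou argument; this difference is cosmetic). The gap is in your treatment of the right-hand side. Martingale convergence gives $\phi_n\to E[\phi\mid\mc F_\infty]$ in $L^p$, where $\mc F_\infty=\sigma\bigl(\bigcup_n\mc T_{(n)}\bigr)$, and nothing in Definition \ref{def:2p1} forces $\mc F_\infty$ to contain every measurable set up to null sets: property iv) controls only the measures of the cubes, not the $\sigma$-algebra they generate. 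For instance, on $X=[0,1]^2$ with Lebesgue measure one may take $\mc T_{(m)}$ to consist of the vertical strips of width $2^{-m}$ (each split into its two half-strips); this satisfies i)--iv), yet for $\phi(x,y)=h(y)$ every $\phi_n$ is the constant $f$, so $F_n=f^p$ does not converge to $F$. Hence the asserted limit $F_n\to F$, and with it the final equality in your displayed chain, fails in general.

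The correct repair is precisely the alternative you set aside: one needs only $F_n\leq F$ together with the fact that $U(x)=\omega_p(x)^p/x$ is decreasing on $(0,1]$, which yields $F_n\,\omega_p\!\left(\frac{f^p}{F_n}\right)^p=f^p\,U\!\left(\frac{f^p}{F_n}\right)\leq f^p\,U\!\left(\frac{f^p}{F}\right)=F\,\omega_p\!\left(\frac{f^p}{F}\right)^p$ for every $n$, with no need to identify $\lim_n F_n$ at all. This monotonicity is not an ``extra computation'' to be avoided: it is quoted directly from Lemma 2 iii) of \cite{4}, and it is exactly how the paper concludes the proof.
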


\begin{proof}
For the general nonnegative $\phi\in L^p(X,\mu)$ we consider the sequence $(\phi_m)_m$, consisting of $\mc T$-step functions, defined by
\[
\phi_m = \sum_{I\in \mc T_{(m)}} \av_I(\phi)\cdot\chi_I
\]
and then if we set
\[
\Phi_m = \sum_{I\in\mc T_{(m)}}\max\left\{\av_J(\phi): I\subseteq J\in\mc T\right\}\cdot\chi_I
\]
we easily see that $\Phi_m = \mt(\phi_m)$, since $\av_J(\phi) = \av_J(\phi_m)$, whenever $J\subseteq I\in \mc T_{(m)}$. Then it is also easy to see that
\begin{equation} \label{eq:3p12}
\int_X\phi_m\,\mr d\mu = \int_X\phi\,\mr d\mu = f,\quad F_m = \int_X\phi_m^p\,\mr d\mu \leq \int_X\phi^p\,\mr d\mu = F,
\end{equation}
for all $m\in\mb N$ and that $\Phi_m$ increases monotonically almost everywhere to $\mt(\phi)$. The relations \eqref{eq:3p12} and the fact mentioned right above can be proved easily by using the definitions of $\phi_m$ and $\Phi_m$. Since $\phi_m$ is a $\mc T$-good function (which is immediate since $\phi_m$ is a $\mc T_{(m)}$-step function) we have as a consequence of Lemma \ref{lem:3p1} that
\begin{equation} \label{eq:3p13}
\int_X\Phi_m^p\,\mr d\mu \leq F_m\cdot\omega_p\!\left(\frac{f^p}{F_m}\right)^p.
\end{equation}
We now use Lemma 2 \rnum 3) of \cite{4}, which states that the function $U(x) = \frac{\omega_p(x)^p}{x}$ is strictly decreasing on $(0,1]$. Thus since $F_m\leq F$ we must have that $F_m\,\omega_p\!\left(\frac{f^p}{F_m}\right)^p\leq F\,\omega_p\!\left(\frac{f^p}{F}\right)^p$, so letting $m\to\infty$ we get by \eqref{eq:3p13} the inequality:
\[
\int_X(\mt\phi)^p\,\mr d\mu \leq F\,\omega_p\!\left(\frac{f^p}{F}\right)^p.
\]
In this way we derive the proof of Lemma \ref{lem:3p2}.
\end{proof}
At last we prove the following
\begin{theorem} \label{thm:3p1}
The following holds
\begin{equation} \label{eq:3p14}
\sup\left\{\int_X(\mt\phi)^p\,\mr d\mu: \phi\geq 0,\ \int_X\phi\,\mr d\mu=f,\ \int_X\phi^p\,\mr d\mu=F\right\} = F\,\omega_p\!\left(\frac{f^p}{F}\right)^p,
\end{equation}
for any $f, F$ such that $0<f^p\leq F$.
\end{theorem}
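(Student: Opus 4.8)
The plan is to prove the two inequalities separately, the upper bound being essentially free. That the supremum in \eqref{eq:3p14} is at most $F\,\omega_p(f^p/F)^p$ is immediate from Lemma \ref{lem:3p2}, which asserts exactly $\int_X(\mt\phi)^p\,\mr d\mu \leq F\,\omega_p(f^p/F)^p$ for every non-negative $\phi\in L^p(X,\mu)$ with $\int_X\phi\,\mr d\mu=f$ and $\int_X\phi^p\,\mr d\mu=F$; taking the supremum over all such $\phi$ preserves the inequality. Everything therefore reduces to the reverse inequality, namely that the value $F\,\omega_p(f^p/F)^p$ is actually approached by admissible functions.

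For the lower bound I would first invoke the symmetrization principle of Theorem \ref{thm:2p1} to pass to a one–dimensional extremal problem. Applying \eqref{eq:2p1} with $G_1(x)=x^p$, $G_2\equiv 1$ (the degenerate increasing function; if strictness is wanted one takes $G_2(x)=1+\varepsilon x$ and lets $\varepsilon\to 0$), $K=X$ and $k=\mu(X)=1$ gives, for any fixed non-increasing $g\colon(0,1]\to\mb R^+$,
\[
\sup\left\{\int_X(\mt\phi)^p\,\mr d\mu:\ \phi\geq 0,\ \phi^\star=g\right\}=\int_0^1\left(\frac1t\int_0^t g\right)^p\mr dt .
\]
Since $\phi$ and $\phi^\star=g$ are equimeasurable, the constraints $\int_X\phi\,\mr d\mu=f$ and $\int_X\phi^p\,\mr d\mu=F$ read $\int_0^1 g=f$ and $\int_0^1 g^p=F$. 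Hence the supremum in \eqref{eq:3p14} equals
\[
\sup\left\{\int_0^1\left(\frac1t\int_0^t g\right)^p\mr dt:\ g\ \text{non-increasing},\ \int_0^1 g=f,\ \int_0^1 g^p=F\right\},
\]
and it suffices to exhibit a single admissible $g$ whose Hardy functional attains $F\,\omega_p(f^p/F)^p$.

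The natural candidate is the power function that forces equality at every step of the proof of Lemma \ref{lem:3p1}: equality in H\"older's inequality requires $\mt\phi$ proportional to $\phi$, which after symmetrization means $\frac1t\int_0^t g=w\,g(t)$ for a constant $w$. Solving this ODE yields $g(t)=c\,t^{-\gamma}$ with $\gamma=(w-1)/w$. I would then fix $c$ and $\gamma$ from the constraints: $\int_0^1 g=\frac{c}{1-\gamma}=f$ gives $c=f(1-\gamma)$, and $\int_0^1 g^p=\frac{c^p}{1-\gamma p}=F$ then yields $\frac{f^p}{F}=\frac{1-\gamma p}{(1-\gamma)^p}$. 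The decisive point is the algebraic identification: writing $w=\tfrac1{1-\gamma}$, one checks that this relation is precisely $H_p(w)=f^p/F$, so $w=\omega_p(f^p/F)$ by the definition of $\omega_p$; and that the Hardy functional evaluates to $\int_0^1\big(\frac1t\int_0^t g\big)^p\mr dt=\frac{f^p}{1-\gamma p}=F\,w^p$. I expect this matching of $H_p$ with the constraint ratio to be the only genuinely non-routine computation; once it is in place the extremal value $F\,\omega_p(f^p/F)^p$ is reached exactly.

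Finally I would verify admissibility of the extremizer: $\gamma\geq 0$, so that $g$ is non-increasing, because $\omega_p\geq 1$; and $\gamma p<1$, so that $g\in L^p$ and the Hardy integral converges, because $\omega_p(f^p/F)<\tfrac{p}{p-1}$ whenever $f>0$ (the degenerate case $f^p=F$ corresponds to $\gamma=0$ and a constant $g$, giving value $F$). By Theorem \ref{thm:2p1} this $g$ is approached by a sequence $(\phi_n)$ with $\phi_n^\star=g$, whence the supremum in \eqref{eq:3p14} is at least $F\,\omega_p(f^p/F)^p$. Combined with the upper bound from Lemma \ref{lem:3p2}, this establishes \eqref{eq:3p14}.
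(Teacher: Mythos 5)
Your proposal is correct and follows essentially the same route as the paper: the upper bound via Lemma \ref{lem:3p2}, and the lower bound by applying the symmetrization principle of Theorem \ref{thm:2p1} to the power-function extremizer $g(t)=c\,t^{-\gamma}$ (the paper writes it as $K t^{-1+1/\alpha}$ with $\alpha=\omega_p(f^p/F)$, which is your $w$), with the same identification $H_p(w)=f^p/F$ doing the decisive work. The only cosmetic difference is that you derive the form of $g$ from the equality case of H\"older, whereas the paper posits it directly.
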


\begin{proof}
Obviously by Lemma \ref{lem:3p1} and \ref{lem:3p2} we conclude that the supremum in \eqref{eq:3p14} is less or equal to the right side.
For the opposite inequality we consider the following function $g: (0,1]\to \mb R^+$ defined by
\[
g(t) = K t^{-1+\frac{1}{\alpha}},
\]
where $K$ is a fixed positive number and $\alpha>0$ will be chosen in the sequel. We search now for $K, \alpha$ such the following inequalities hold: $\int_0^1 g(t)\,\mr dt=f$ and $\int_0^1 g^p(t)\,\mr dt=F$.
In fact $\int_0^1g(t)\,\mr dt=f \iff K=\frac{f}{\alpha}$, while $\int_0^1g^p(t)\,\mr dt=F \iff \alpha = \omega_p\!\left(\frac{f^p}{F}\right)$. Indeed for these values of $K$, $\alpha$ we have that $\int_0^1g^p(t)\,\mr dt = \frac{f^p}{\alpha^p}\frac{1}{(-p+\frac{p}{\alpha}+1)} = \frac{f^p}{p\alpha^{p-1}-(p-1)\alpha^p}$, which equals to $F$ if and only if $p\alpha^{p-1}-(p-1)\alpha^p = \frac{f^p}{F}$, or equivalently when $H_p(\alpha) = \frac{f^p}{F}$, that is $\alpha = \omega_p\!\left(\frac{f^p}{F}\right)$. \\
Consider now these values of $K, \alpha$. It is immediate that for any $t\in(0,1]$, the following equality holds
\begin{equation} \label{eq:3p15}
\frac{1}{t}\int_0^tg(u)\,\mr du = \alpha g(t) = \omega_p\!\left(\frac{f^p}{F}\right)g(t).
\end{equation}
Then we use Theorem \ref{thm:2p1} in the form
\begin{equation} \label{eq:3p16}
\sup\left\{ \int_X(\mt\phi)^p\,\mr d\mu : \phi^\star = g \right\} = \int_0^1 \Bigl(\frac{1}{t}\int_0^tg\Big)^p\mr dt.
\end{equation}
By \eqref{eq:3p15}, \eqref{eq:3p16} and the integral conditions for $g$ we thus have that
\begin{equation} \label{eq:3p17}
\sup\left\{ \int_X(\mt\phi)^p\,\mr d\mu : \phi^\star=g \right\} = F\,\omega_p\!\left(\frac{f^p}{F}\right)^p.
\end{equation}
This gives us immediately, because for any $\phi$ such that $\phi^\star=g$ we have $\int_X\phi\,\mr d\mu=f$ and $\int_X\phi^p\,\mr d\mu=F$ ($\phi$ is equimeasurable to $g$), that \eqref{eq:3p14} is true. The proof of the evaluation of \eqref{eq:1p6} is now complete.
\end{proof}

\section{Proof of the inequality \eqref{eq:1p9}} \label{sec:4}

{\em Second proof of Theorem 2 (different from the one that appears in \cite{3})}
\begin{proof}
Our aim is to prove the following inequality
\begin{multline} \label{eq:4p1}
\begin{aligned}&\int_X(\mt\phi)^p\,\mr d\mu \leq -\frac{q(\beta+1)}{(p-1)q\beta+(p-q)}f^p + \end{aligned} \\
+\frac{p(\beta+1)^q}{(p-1)q\beta+(p-q)}\int_X\phi^q(\mt\phi)^{p-q}\,\mr d\mu,
\end{multline}
or equivalently the following
\begin{multline} \label{eq:4p2}
\int_X\phi^q(\mt\phi)^{p-q}\,\mr d\mu\geq A(p,q,\beta)\int_X(\mt\phi)^p\,\mr d\mu+\frac{q}{p}\frac{1}{(\beta+1)^{q-1}}f^p,
\end{multline}
where $A=A(p,q,\beta)$, is defined by $A=\frac{(q-1)\beta}{(\beta+1)^q}+\frac{p-q}{p}\frac{1}{(\beta+1)^{q-1}}$, and this will be done directly by using Theorem A. For this reason we consider the difference $$L(p,q,\beta)=A\int_X(\mt\phi)^p\,\mr d\mu-\int_X\phi^q(\mt\phi)^{p-q}\,\mr d\mu.$$
We just need to prove that $$L(p,q,\beta)\leq-\frac{q}{p}\frac{1}{(\beta+1)^{q-1}}f^p.$$
We now use the following notation for the integrals below:
$J_0=\int_X(\mt\phi)^p\,\mr d\mu$, $J_q=\int_X\phi^q(\mt\phi)^{p-q}\,\mr d\mu$ and
$J_1=\int_X\phi(\mt\phi)^{p-1}\,\mr d\mu$ which are defined for any fixed $q\in[1,p]$, and every $\phi\in L^p(X,\mu)$, which is a $\mc T$-good function, with $\int_X\phi\,\mr d\mu=f$, where $f$ is a fixed positive constant.

It is immediate then, according to Theorem A, that the inequality
\begin{equation} \label{eq:4p3}
J_0 \leq \frac{p}{p-1}J_1-\frac{1}{p-1}f^p
\end{equation}
is true. Additionally the inequality that follows should be true in view of H\"{o}lder's inequality
$$J_1 \leq J^{1/q}_q J^{(q-1)/q}_0,$$ which in turn gives
\begin{equation} \label{eq:4p4}
J_q\geq\frac{J^q_1}{J^{q-1}_0}.
\end{equation}
Now in view of the above definitions, and because of \eqref{eq:4p3}, we have that
\begin{equation} \label{eq:4p5}
L(p,q,\beta)=AJ_0-J_q\leq AJ_0-J_0^{-q+1}J_1^q.
\end{equation}
The left side of \eqref{eq:4p5} is less or equal than
$$AJ_0-J_0^{-q+1}\big(\frac{p-1}{p}J_0+\frac{1}{p}f^p\big)^q$$
in view of \eqref{eq:4p3}.
We define now the following function of the variable $x>0$,
$$G(x)=Ax-x^{-q+1}\big(\frac{p-1}{p}x+\frac{1}{p}f^p\big)^q$$
which obviously equals to
$$G(x)=Ax-x\big(\frac{p-1}{p}+\frac{1}{p}\frac{f^p}{x}\big)^q.$$
Then one can easily see that
$$\frac{d}{dx}G(x)=A-t^q+\frac{q}{p}t^{q-1}(pt-(p-1)),$$
where $t$ is defined by $t=t(x)=\frac{p-1}{p}+\frac{1}{p}\frac{f^p}{x}>\frac{p-1}{p}$. Thus $$\frac{d}{dx}G(x)=F(t):=A+(q-1)t^q-\frac{q(p-1)}{p}t^{q-1}.$$
Then we immediately see that $\frac{d}{dt}F(t)=q(q-1)t^{q-2}\big(t-\frac{p-1}{p}\big)>0$, for every $t>\frac{p-1}{p}$.
Thus we must have that $F(t)>F(\frac{p-1}{p})=A(p,q,\beta)-(\frac{p-1}{p})^q$. We define now the following function of the variable $\beta>0$, by
$h(\beta)=A(p,q,\beta)$. Then it is not difficult to see that $\frac{d}{d\beta}h(\beta)=\frac{q(q-1)}{p}\frac{1-(p-1)\beta}{(\beta+1)^{q-1}}$.
Thus $h(\beta)$ attains its maximum value at $\beta_0=\frac{1}{p-1}$, and this equals to $h(\beta_0)=(\frac{p-1}{p})^q$.
Thus $F(\frac{p-1}{p})\leq 0$. We now set $t_0=\frac{p-1}{p}$. Then by the evaluation of the derivative of $F(t)$, we see that for any $t>t_0$, we have that
$F(t)>F(t_0)$. Additionally $F$ is strictly increasing on $[\frac{p-1}{p}, +\infty)$, and $F(t)$ tends to $+\infty$ as $t$ does. Thus for any fixed $\beta>0$,
there exists a unique $t_{\beta}>t_0$, for which $F(t_{\beta})=0$. For any $\beta>0$, we define $x_{\beta}>0$, by the following relation
$$t_{\beta}=\frac{p-1}{p}+\frac{1}{p}\frac{f^p}{x_{\beta}}.$$

Then according to the facts that are given above, we have that
$\frac{d}{dx}G(x)>0$, for $x\in (0,x_{\beta})$, while $\frac{d}{dx}G(x)<0$ for $x>x_{\beta}$. Additionally the explicit expression of $x_{\beta}$ is as follows
$x_{\beta}=\frac{f^p}{pt_{\beta}-(p-1)}$. By the monotonicity properties and the definition of $G(x)$, we conclude that $L(p,q,\beta)\leq G(x_{\beta})$. On the
other hand $G(x_{\beta})=Ax_{\beta}-x_{\beta}(t_{\beta})^q=\frac{A-(t_{\beta})^q}{pt_{\beta}-(p-1)}f^p$, and we conclude our result by showing that
$\frac{A-(t_{\beta})^q}{pt_{\beta}-(p-1)}=-\frac{q}{p}\frac{1}{(\beta+1)^{q-1}}$. We restrict ourselves first on the range
$\beta\in (0,\frac{1}{p-1})$. Then
$\frac{1}{\beta+1}>t_0=\frac{p-1}{p}$, an as one can easily see after doing some simple calculations, that the following equality is true
$F(\frac{1}{\beta+1})=0$, thus we must have by the definition of $t_{\beta}$ and the monotonicity of $F(t)$, that $t_{\beta}=\frac{1}{\beta+1}$, for every
$\beta\in (0,\frac{1}{p-1})$.

Our wish is to prove $\frac{A-(t_{\beta})^q}{pt_{\beta}-(p-1)}=-\frac{q}{p}\frac{1}{(\beta+1)^{q-1}}$, or equivalently, by replacing $\frac{1}{\beta+1}$ by
$t_{\beta}$ in the preceding equation we see that is enough to show that
$$\frac{A-(t_{\beta})^q}{pt_{\beta}-(p-1)}=-\frac{q}{p}t_{\beta}^{q-1},$$
which is the same as
$$ A+(q-1)t_{\beta}^q-\frac{q}{p}(p-1)t_{\beta}^{q-1}=0,$$
or as $F(t_{\beta})=0$, which is obviously true by the definition of $t_{\beta}$.

Now for the range $\beta\in(\frac{1}{p-1}, +\infty)$, we still have
$$G(x_{\beta})=\frac{A-(t_{\beta})^q}{pt_{\beta}-(p-1)}f^p,$$
and we want to show that the right side of this equality is less or equal than $-\frac{q}{p}\frac{1}{(\beta+1)^{q-1}}$. But since $F(t_{\beta})=0$, we still
have (see also above) that the following equality is true
$$\frac{A-(t_{\beta})^q}{pt_{\beta}-(p-1)}=-\frac{q}{p}t_{\beta}^{q-1}.$$
Now since $\beta>\frac{1}{p-1}$, we obtain the immediate inequalities $t_{\beta}>t_0=\frac{p-1}{p}>\frac{1}{\beta+1}$, so from the equality right above
we conclude the desired inequality.
\end{proof}

At this point we give the following.
\begin{proof}[Proof of Corollary \ref{cor:1}]~\\
Let $g: (0,1]\to \mb R^+$ be non-increasing, such that $\int_0^1g(u)\,\mr du=f$. Fix a non-atomic probability space $(X,\mu)$ equipped with a tree structure $\mc T$, for which the $\mc T$-step functions (which are included in the $\mc T$-good functions) are dense in $L^p(X,\mu)$. Then
\eqref{eq:1p9} is true for every $L^p$-function $\phi$, as can be easily seen by arguments similar to those in Lemma 3.2. Applying Theorem \ref{thm:2p1} for the pair of functions
\[
\left(G_1(t) = t^p, G_2(t)=1\right)\quad \text{and}\quad \left(G_1'(t) = t^{p-q}, G_2'(t)=t^q\right)
\]
we conclude that there exists $\phi_n: (X,\mu)\to\mb R^+$ such that $\phi_n^\star=g$, for which
\begin{equation} \label{eq:4p6}
\lim_n\int_X(\mt\phi_n)^p\,\mr d\mu = \int_0^1\left(\frac{1}{t}\int_0^tg\right)^p\mr dt,
\end{equation}
and
\begin{equation} \label{eq:4p7}
\lim_n\int_X\phi_n^q(\mt\phi_n)^{p-q}\,\mr d\mu = \int_0^1\left(\frac{1}{t}\int_0^tg\right)^{p-q}g^q(t)\,\mr dt.
\end{equation}
Applying \eqref{eq:1p8} for every $(\phi_n)$, and taking the limits as $n\to\infty$, we conclude by \eqref{eq:4p6} and \eqref{eq:4p7} the statement of Corollary \ref{cor:1}.

We now prove that \eqref{eq:1p10} is best possible. We proceed to this as follows: We first treat the case where $\beta=\frac{1}{p-1}$. We consider the following continuous, decreasing function $g_\alpha(t) = c\, t^{-\alpha}$, defined in $(0,1]$, where $c = f(1-\alpha)$, and
$\alpha\in \bigl(0,\frac{1}{p}\bigr)$.
Then it is easy to show that $\int_0^1 g_\alpha(u)\,\mr du=f$, while $g_\alpha\in L^p(0,1)$. \\
Note that for any $t\in (0,1]$ the following equality holds $\frac{1}{t}\int_0^tg(u)\,\mr du = (\frac{p}{p-1})g(t)$. So we consider the difference
\[
J = \int_0^1\left(\frac{1}{t}\int_0^tg_\alpha\right)^p\mr dt - \left(\frac{p}{p-1}\right)^q\int_0^1g_\alpha^q(t)\left(\frac{1}{t}\int_0^tg_\alpha\right)^{p-q}\mr dt
\]
which is equal to
\[
J = \left(\frac{1}{1-\alpha}\right)^p\int_0^1g_\alpha^p(t)\,\mr dt - \left(\frac{p}{p-1}\right)^q\left(\frac{1}{1-\alpha}\right)^{p-q}\int_0^1g_\alpha^p(t)\,\mr dt.
\]
Since $\int_0^1g_\alpha^p(t)\,\mr dt = f^p(1-\alpha)^p\frac{1}{1-\alpha p}$, we have by the above evaluation of $J$, that
\begin{align*}
J &= \frac{f^p}{1-\alpha p} - \left(\frac{p}{p-1}\right)^q(1-\alpha)^q\frac{f^p}{1-\alpha p} = \\
&=-\frac{f^p}{1-\alpha p}\left[\left(\frac{p}{p-1}\right)^q(1-\alpha)^q-1\right] = -f^p \,G(\alpha),
\end{align*}
where $G(\alpha)$ is defined for any $\alpha\in \bigl(0,\frac{1}{p}\bigr)$ by $G(\alpha) = \frac{\left(\frac{p}{p-1}\right)^q(1-\alpha)^q-1}{1-\alpha p}$.
But as it is easily seen, by using de L' Hospital's rule,
\[
\lim_{\alpha\to 1/p^-}G(\alpha) = -q\left(1-\frac{1}{p}\right)^{q-1}\left(\frac{p}{p-1}\right)^q\left(-\frac{1}{p}\right) = \frac{q}{p-1}.
\]
We now prove the sharpness of \eqref{eq:1p10}, for any $\beta$ such that $0<\beta<\frac{1}{p-1}$. We fix such a $\beta$, and we consider the following continuous, decreasing function $g_\beta(t) = c\, t^{-\alpha}$, defined in $(0,1]$, where $c = f(1-\alpha)$, and $\alpha=\frac{\beta}{\beta+1}$. Then
$\alpha\in \bigl(0,\frac{1}{p}\bigr)$,
and it is easy to see that $\int_0^1 g_\beta(u)\,\mr du=f$, while for any $\beta$ as above, $g_\beta\in L^p(0,1)$. \\
Moreover $\int_0^1g^p_\beta(u)\,\mr du=\frac{f^p}{(\beta+1)^p} \frac{\beta+1}{1-\beta(p-1)}$. Note that for any $t\in (0,1]$ the following equality holds $\frac{1}{t}\int_0^tg_\beta(u)\,\mr du = (\beta+1)g_\beta(t)$. We then consider the difference
\begin{multline}
\begin{aligned}&J =\int_0^1g_\beta^q(t)\left(\frac{1}{t}\int_0^tg_\beta\right)^{p-q}\mr dt- \end{aligned} \\
-\left[\frac{(q-1)\beta}{(\beta+1)^q}+\frac{p-q}{p}\left(\frac{1}{\beta+1}\right)^{q-1}\right]\int_0^1\left(\frac{1}{t}\int_0^tg_\beta\right)^p\mr dt
\end{multline}
Then due to the above mentioned relations, we can see easily after some simple calculations that
$J=\frac{q}{p}\frac{1}{(\beta+1)^{q-1}}f^p$. The proof of Corollary 1 is now complete.
\end{proof}

Now for the proof of Theorem \ref{thm:2}, we need to prove the sharpness of \eqref{eq:1p9}. This is easy now to show, since by Theorem \ref{thm:2p1} for any $g: (0,1]\to \mb R^+$ non increasing, there exists a sequence $\phi_n: (X,\mu)\to\mb R^+$ of rearrangements of $g$ such that
\begin{equation} \label{eq:4p8}
\lim_n\int_X(\mt\phi_n)^p\,\mr d\mu = \int_0^1\left(\frac{1}{t}\int_0^tg\right)^p\mr dt
\end{equation}
and
\begin{equation} \label{eq:4p9}
\lim_n\int_X\phi_n^q (\mt\phi_n)^{p-q}\,\mr d\mu = \int_0^1g^q(t)\left(\frac{1}{t}\int_0^tg\right)^{p-q}\mr dt.
\end{equation}
We discuss now the case where $0<\beta<\frac{1}{p-1}$, and we consider the function $g_\beta$ (denoted now as $g$), constructed in the proof of Corollary \ref{cor:1}. We choose a rearrangement $\phi_{n}$ of $g$ such that
\[
\left|\int_0^1\left(\frac{1}{t}\int_0^tg\right)^p\mr dt - \int_X\left(\mt\phi_{n}\right)^p\mr d\mu\right| \leq \frac{1}{n}
\]
and
\[
\left|\int_0^1g^q(t)\left(\frac{1}{t}\int_0^tg\right)^{p-q}\mr dt - \int_X\phi_{n}^q\left(\mt\phi_{n}\right)^{p-q}\mr d\mu\right| \leq \frac{1}{n}
\]
Then, by the choice of $g$, we conclude that\eqref{eq:1p9} is best possible.
The case $\beta=\frac{1}{p-1}$ is entirely similar, so we omit it.
The proof of Theorem \ref{thm:1}, is now complete.

At last we add in this section the following note

{\bf Remark 4.1} {\em Inequality \eqref{eq:1p9} is true even in the case where $\phi$ is not $\mc T$-good, and this can be proved by the method of the proof
of Lemma 3.2, as can be easily seen.}

We state at last the following
\begin{ccorollary}{2} \label{cor:2}
Let $\phi\in L^p(X,\mu)$ be non-negative, such that $\int_X\phi\,\mr d\mu=f$ and $\int_X\phi^p\,\mr d\mu=F$, where $f, F$ are fixed variables satisfying $0<f^p \leq F$. Then the following inequality is true for every value of the parameter $\beta$
$$ \int_X(\mt\phi)^p\,\mr d\mu \leq \frac{\beta+1}{\beta}\frac{(\beta+1)^{p-1}F-f^p}{p-1}$$
\end{ccorollary}

This Corollary is an immediate consequence of Theorem 2, by setting in the inequality \eqref{eq:1p9} the value $p$ in place of $q$. This is exactly inequality
(4.25) of \cite{4}, which gives us all the information we need for the evaluation of the Bellman function \eqref{eq:1p6}, as is mentioned in the introduction.
The Proofs are now complete.

Nikolidakis Eleftherios, Visiting Professor, University of Ioannina, Department of Mathematics, Ioannina, Greece.

\end{document}